\newcommand{\FFF}{{\mathcal{F} }} 
\newcommand{\RRR}{{\mathrm I\! \textsc{R} }} 
\newcommand{\XXX}{{\mathcal{X} }} 
\newcommand{\TTT}{{\mathcal{T} }} 
\newcommand{\YYY}{{\mathcal{Y} }} 
\newcommand{\ZZZ}{{\mathcal{Z} }} 
\newcommand{\QQQ}{{\mathcal{Q} }}
\newcommand{\WWW}{{\mathcal{W} }} 
\newcommand{\UUU}{{\mathcal{U} }} 
\newcommand{\AAA}{{\mathcal{A} }}
\newcommand{\ZZ}{{\mathcal{Z}^\# }} 
\newcommand{\EEE}{{\mathrm I\! \textsc{E} }} 
\newcommand{\PPP}{{\mathrm I\! \textsc{P} }} 
\newcommand{\PP}{{\mathbf P }}
\newcommand{\probaSpace}{{(\Omega,\FFF,\PPP) }} 
\newcommand{\omegaNav}{{\Omega^\text{NAV} }}
\newcommand{\de}{{\delta }}
\newcommand{\al}{{\alpha }}
\newtheorem{theorem}{\textsc{ \textbf{Theorem} }}[section]
\newtheorem{definition}{\textsc{ \textbf{Definition} }}[section]
\newtheorem{remarque}{\textsc{ \textbf{Remark} }}[section]
\newtheorem{proposition}{\textsc{ \textbf{Proposition} }}[section]
\newtheorem{demonstration}{\textsc{ \textbf{Proof} }}
\newtheorem{algorithm}{\textsc{ \textbf{Algorithm} }}[section]
\newenvironment{ALGORITHM}{ \begin{algorithm} }{ \end{algorithm} }
\title{
A Stochastic Dynamic Principle for Hybrid Systems with Execution Delay and Decision Lags 
 }
\author{\IEEEauthorblockN{K. Aouchiche\IEEEauthorrefmark{1}, F. Bonnans\IEEEauthorrefmark{2}, G. Granato\IEEEauthorrefmark{1}, H. Zidani\IEEEauthorrefmark{3}}
\IEEEauthorblockA{\IEEEauthorrefmark{1}Renault SAS. 
kamal.aouchiche@renault.com;
giovanni.granato@renault.com}
\IEEEauthorblockA{\IEEEauthorrefmark{2}
INRIA Saclay \& CMAP.
frederic.bonnans@inria.fr}
\IEEEauthorblockA{\IEEEauthorrefmark{3}
ENSTA ParisTech \& INRIA Saclay.
hasnaa.zidani@ensta-paristech.fr}}
\begin{document}

	\maketitle		

	\begin{abstract}
	This work presents a stochastic dynamic programming (SDP) algorithm that aims at minimizing an economic criteria based on the total energy consumption of a range extender electric vehicle (REEV).
	This algorithm integrates information from the REEV's navigation system in order to obtain some information about future expected vehicle speed. 
	The model of the vehicle's energetic system, which consists of a high-voltage (HV) battery, the main energy source, and an internal combustion engine (ICE), working as an auxiliary energy source), is written as a hybrid dynamical system and the associated optimization problem in the hybrid optimal control framework.
	The hybrid optimal control problem includes two important physical constraints on the ICE, namely, an activation delay and a decision lag. 
	Three methods for the inclusion of such physical constraints are studied. 
	After introducing the SDP algorithm formulation we comment on numerical results of the stochastic algorithm and its deterministic counterpart.
	\end{abstract}
	
	\section{Introduction}
	
	Electrified automotive powertrain technology is being greatly developed as an increasingly number of carmakers wish to adopt vehicles with an electrification of the powertrain as a viable solution for reducing greenhouse gas emissions worldwide to meet stringer regulative legislation and consumers' demand. 
	There is a strong effort of major constructors in order to deploy fully electric vehicles (EVs) as early as possible in some car market segments. 
	This work focus on a specific class of EVs, namely, range extender electric vehicles (REEV). 
	This study aims at synthesizing a supervising optimal control strategy of the range extender (RE) using information from the vehicle navigation system (NAV) as well as a statistical analysis of previously stored driving data.
	One important feature of this study is the incorporation of two RE's physical constraints -- the execution lag and the decision delay -- in the optimal control problem formulation. 
	From the mathematical point of view, the system is modeled as a hybrid dynamical system in which discrete decisions may suffer from an execution delay and a decision lag. 
	The next step is the formulation of a stochastic optimal control problem in this hybrid framework alongside a stochastic dynamic programming principle which is then used to obtain an optimal controller.%

	 In the literature, there are several works that deal with the synthesis of optimal -- or sub-optimal -- supervisory control strategies for a vehicle with a certain degree of hybridization.
	These works include heuristics such as model-based \cite{sciarretta} or fuzzy logic algorithms \cite{baumann}.
	Attention is also given to adaptive control techniques \cite{beck2007}.
	Works \cite{chanchiao}, \cite{deantatejr} use stochastic dynamic programming algorithms to synthesize control strategies that are not cycle-specific but do not include any information from the navigation system. 
 	
	 This study seeks to synthesize an optimal control sequence of the ICE in order to minimize an economic cost criteria related to overall energy consumption. 
	In this case, a rather realistic scenario is that there is no precise information about how the vehicle will behave -- i.e. what its power demand will be -- on a particular trip, even if there is some information available from a navigation system. 
	
	Many modern control systems involve some high-level logical decision making process coupled with underlying low-level continuous processes \cite{Varaiya93smartcars}, \cite{Lygeros}, \cite{Dharmatti}, \cite{Back}. 
	Some of those are flight control systems, production systems, chemical processes and traffic management systems. 
	The term hybrid stems from the different nature of the systems' evolution, continuous and discrete. 
	Hybrid systems have some supervision logic that intervenes punctually between two or more continuous functions. 
	An important share  of this research seeks to extend the well known theory of continuous systems - for instance, the maximum principle \cite{Garavello}, \cite{Arutyunov} and the dynamic programming principle \cite{Dolcetta:143} - to hybrid systems. 
	Several authors \cite{Branicky}, \cite{Zhang}, \cite{bens} propose different modeling frameworks for hybrid dynamical systems varying in degree of generality. 
	This study bases itself on the quite general framework \cite{bens}. 
	Many of the applications appearing in the literature can be viewed as their particular cases.	
	Works \cite{Branicky}, \cite{Zhang}, \cite{bens}, \cite{Shaikh} make considerable effort towards modeling and simulating hybrid dynamical systems in both deterministic and stochastic cases. 
	Continuous hybrid systems including decision lags and executions delays are studied in \cite{pham}, \cite{pAsea_pZak1999} where a suitable treatment is given to describe these systems.
	
	The contribution of this article is the use of a stochastic speed model in an optimal hybrid control framework, incorporating NAV information, that integrates the RE's activation delay and decision lag constraints.
	
	\section{Power Management Strategy for Range Extender Electric Vehicles}

	\subsection{Range Extender Electric Vehicles}
	
	This section discusses the model of the power management system of a REEV that is to be optimized and introduces the notation used throughout the document.
	A REEV is a vehicle that combines a primary power source - a HV battery - and a small dimensioned (powerwise) secondary power source - in our case an ICE. 
	The traction (or propulsion) of the vehicle is performed by an electric motor connected to the vehicle's wheels. 
	Both power sources may supply the energy demanded by the driver. 
	Additionally, since the model considers a range-extender electric vehicle type, it cannot rely solely on the RE's power to drive the vehicle.
	The architecture is that of a series hybrid electric vehicle, which means that the ICE is not mechanically connected  to the transmission.
	Instead, a generator transforms the mechanical energy produced in the ICE into electric current that can be directed towards the electric motor or charge the battery. 
	Also, the powertrain components are represented by a quasi-static models, detailed in \cite{guzzella}, thus neglecting any transient response.
	Controls available include the turning on and off of the ICE and the power produced in it. 
	Due the discrete nature of the switch on/off control and the continuous nature off the power control, the control variable is seen as a hybrid control. 
	The controlled variables are the state of charge (SOC) of the battery and the ICE state on or off.
	A power management strategy (PMS) for a REEV is a control sequence that dictates the state of the ICE - on or off - and if it is on, how much power it will supply to the electric motor. 
	

	\subsection{Stochastic Model}
	The instant SOC evolution depends on the vehicle's instant power demand. 
	To devise efficient PMSs, the control synthesizer needs information about future power demands of the system as close as possible to the real power demand to be requested. 
	Hence, given a fixed route, since the control synthesizer does not rely on exact before-drive speed knowledge, the PMS must be robust enough to "absorb" most of the situations occurring in a real driving cycle.
	In this case presented here, it contains information about possible deviations from the speed suggested by the NAV. 
	The NAV outputs useful information is in the form of a geographic route. 
	After the driver has entered a geographic location as a destination point in the NAV, it suggests a route, calculated using some optimization algorithm. 
	A working hypothesis is that the vehicle will follow this route suggested by the NAV.
	The route is an assemblage of links, all of which have an associated constant recommended cruise speed.
	The route segmentation is used as discretization of the optimal control problem.
	The index $k=1,\cdots,K$ refers to route links nodes. 
	Each link has an entry point and an exit point, thus, for a route with $K$ links we have $K+1$ nodes. 
	Index $k$ indexes a variable at a link's entry node. 
	Then, at the $k$-th link's entry point, one has the battery SOC $x_k$ , the vehicle instant speed $y_k$ and the ICE state $q_k$ . 
	In the same manner, the ICE controls will be applied at each link entrance, denoted by $u_k$, the ICE power, and $w_k$, the ICE on/off switch decision.
	
	Each link's speed depends on particular characteristics of the route segment considered, such as type, number of lanes, location and may be dynamic, depending on weather conditions or the hour of the day. 
	The speed profile formed with the speed suggested for each route link the \emph{NAV speed profile}. 
	As a result of the disparity between the real driving speed - not known \textit{a priori }- and the NAV speed profile, the knowledge on how much power will be instantaneously required by the vehicle in a car trip is not known. 
	Let $\omegaNav$ be the -- finite and discrete -- set of possible NAV properties values, consisting for instance of number of lanes, location and hour of the day. 
	Then, for a property set $\eta \in \omegaNav$, let $\Omega(\eta)$ be the space of possible vehicle speeds.
	For example, if $\eta$ corresponds to a $3$ lanes link located on a freeway at $14$ p.m., one possible speed set may be $\Omega(\eta) = \{90,110,130,150\}$.
	For a fixed $\eta$, let $\FFF$ be a $\sigma$-algebra over $\Omega(\eta)$ and $\PPP$ a probability. 
	Then, we define $\xi : \Omega(\eta) \rightarrow \Omega(\eta)$ to be a random variable over the probability space $\probaSpace(\eta)$ -- we denote by $\xi$ the random variable and its particular realization, the utilization will be clear from the context. 
	Given two sets of properties $\eta, \eta' \in \omegaNav$ (representing two neighboring links) and two speed values $y \in \Omega(\eta)$, $y' \in \Omega(\eta')$, we wish to define the probability of changing to speed $y'$ from speed $y$.
	 It is natural to consider that the transition probability from $y$ to $y'$ depends on $\eta$ and $\eta'$. For all $\eta, \eta' \in \omegaNav$, we define a transition probability $\PP$ between $y$ and $y'$ to be $\PP(y,y') = \PPP(\xi' = y' ~|~ \xi=y, \eta, \eta' )$.
	Such properties should be tailored to contain relevant useful information. For instance, one may not need to consider time frames $15'$ apart, whereas a distinction between rush hours and lower traffic periods might be a more interesting choice.

	\subsection{Execution Delay and Decision Lag}
	
	This section explains the motivation of including the execution delay and the decision lag constraints in the model, as well as  the method used to incorporate these constraints in the optimal hybrid control problem.
	Frequent switching of the RE is undesirable in order to avoid mechanical wear off of the RE and acoustic nuisance for the driver. 
	Decision lags impose that some minimum time is to be respected before turning the RE on and thus, avoid this issues.
	Also, the RE ability of delivering power to the vehicle's HV network is constrained by the catalytic converter's temperature.
	Indeed, the catalytic converter present in the ICE's exhaust pipe must undergo a warm-up period in order to achieve a satisfactory working point. 
	While this is not the case, the RE must avoid functioning in any working point different than the minimal one -- required for not stalling -- and thus, cannot provide any auxiliary power to the system.
	The activation delay models that behavior.
	In this application, however distinct in nature, the decision lag and the execution delay have the same order of magnitude of about $120 s$. 
	Thus, they are conveniently regrouped in only one delay/lag constant variable $\delta$, expressed in time units.
	
	These constraints are introduced through a state variable at each link entry point, $t_k \in [0,\delta]$, representing a counting time since the last turn-RE-on decision. 
	In a nutshell, if $t_k < \delta$, no switches are available to the controller and no power can be issued from the RE. 
	Whenever $t_k \geq \delta$, switches are available and the RE can provide auxiliary power to the vehicle.

	\subsection{Hybrid State and Controls}

	A hybrid state is a state vector consisting of continuous as well as discrete valued variables. 
 	The continuous variables are the battery SOC $x_k \in [0,1]$, the vehicle instant speed $y_k \in [0,y_{\text{max}}]$ and the counting time $t_k \in [0,\delta]$, and the discrete variable is the ICE state $q_k \in \{0,1\}$. 
	Let $\ZZZ = \XXX \times \YYY \times \TTT \times \QQQ$ be the hybrid state space and $z_k = (x_k,y_k,t_k,q_k) \in \ZZZ$ be the hybrid state at each link's entry point.
	Let $w_k$ be the discrete control applied to the system at link $k$. 
	The discrete controls are used to switch between continuous modes of the hybrid system. 
	They are valued in a discrete set $\WWW(z_k) \subset \{0,1\}$ in the case of a two mode system. 
	The control $w_k=0$ means no particular order, simply leaving whatever mode is on active, while $w_k=1$ represents a mode switch. 
	We recall that only off$\rightarrow$on transitions are delayed in time. 
	Mode transitions are decided at link $k$ but are executed afterwards and between its decision and execution instants, no other switch order can be decided. 
 	Continuous controls $u_k$ are valued in a continuous set $\UUU(z_k)$. 
	Let $a_k = (u_k,w_k)$ be the hybrid control valued in $\AAA(z_k) = \UUU(z_k) \times \WWW(z_k)$, the hybrid control space. 
 	 \begin{definition}
		  The control sequence $(a_1, \cdots, a_K)$ is an \emph{admissible hybrid control sequence} if it satisfies the relations 
		  $(1)$ For $(z_1, \cdots, z_K) \in \ZZZ^K$, we have $(a_1, \cdots, a_K) \in \AAA(z_1) \times \cdot \times \AAA(z_K)$.
	  	$(2)$ Let $\tau = \{ k ~|~ w_k=1 \}$ be the set of links in which a switch decision is made. Then, $\forall i \in \tau$, $t_i \geq \de$.
		  $(3)$ $t_{K+1} > \de$.
		\label{admissibleControl} 
 	 \end{definition}
	The first condition states that every hybrid control should be in the hybrid control domain. 
	The second condition is the decision lag whereas the third one implies that no order that suffers from an execution delay can be decided and not executed.
 	For a hybrid state $z = (x,y,t,q) \in \ZZZ$, we can write the control space as follows: 
  \begin{eqnarray}
		\UUU(z)&=& \left\{ \begin{array}{ccc}
			0 & \text{if} & t < \delta \text{ or } q=0 \\
			\UUU(y) & \text{if} & t \geq \delta \text{ and } q=1 \\
		\end{array} \right.\\
  	W(z)&=& \left\{ \begin{array}{ccc}
			0 & \text{if} & t < \delta \\
			\{0,1\} & \text{if} & t \geq \delta \\
		\end{array} \right.
	\label{controlPendingOrder}
	\end{eqnarray}
  
  \subsection{Hybrid State Evolution and Control Policies}
    
  	This section describes the evolution equations of the state vector components from a known value $z_k$ at link $k$. 
	The speed on the next link is a random variable $\xi_k > 0$. 
	The SOC evolution depends on the power produced by the ICE $u_k$ and $\xi_k$, as well as on $x_k$ and $q_k$. 
	The time since last activation on the next link depends on $w_k$, $t_k$, $\xi_k$ and on the link length $d_k$, given by the NAV. 
	The evolution of the discrete variable $q_k$ depends on decisions made in the last link. 
	This is summarized as follows:
	\begin{eqnarray}
		X_{k+1} &=& f(z_k,u_k,\xi_k,q_k) \label{xeq}\\
		Y_{k+1} &=& \xi_k \\
		T_{k+1} &=& \left\{ \begin{array}{ccc}
			t_k + d_k/\xi_k & \text{if} & w_k=0 \\
			d_k/\xi_k & \text{if} & w_k=1 \\
		\end{array} \right. \label{teq}\\
  	q_{k+1} &=& g(q_k,w_k) \label{qeq}
	\label{stateEvolution}
	\end{eqnarray}
	Given a known state vector $z_k$, the next speed value is not exactly known, but it depends on the next link's set of properties $\eta_{k+1}$ given by the NAV.
	The transition probability $\PP$ dictates the random evolution of the system state vector, i.e., the random process $(Z_k)$, where the $Z_k$ are $\FFF_k$-measurable for all $k \geq 1$.
	An admissible control sequence depends on the state value at future times, which are unpredictable, and so are the future control domains. 
	Nonetheless, one can define of a sequence of hybrid control laws, or a \emph{hybrid policy} $(\al_0,\cdots,\al_{T-1})$ where each $\al_k$ is a function of the hybrid state into a hybrid control $a_k$:
	
	\begin{definition} 
		A hybrid policy $\pi = (\al_1,\cdots,\al_{K})$ is a sequence of functions $\al$ where
		\begin{eqnarray}
			 \al &:&  \ZZZ \rightarrow \AAA \\
			\nonumber        & & z \mapsto \al(z) = a.
		\end{eqnarray}			
  	A policy $\pi$ is said to be an \emph{admissible hybrid policy wrt $z$} if, given a initial state $z$, the sequence $\pi=(\al_1(z),\cdots,\al_K(z_K))$ is an admissible hybrid control sequence for all $z_2,\cdots,z_K \in \ZZZ$. 
	Let $\Pi(z)$ be the set of all admissible hybrid policies with respect to the initial state $z$.
  \end{definition}	
	As a consequence, if a policy $\pi \in \Pi(z)$ is admissible, the controls produced by it are an admissible hybrid control sequence.
		
	\subsection{Cost Functions}
	Given a hybrid state $z$ and hybrid control $a$, let $\l(z,a) \in \RRR$ be an instant cost function. 
	It is well advised to make explicit the separation of the instant cost function in its continuous and discrete control associated components. 
	Given a hybrid control $a=(u,w)$, denote $\l(z,a) = \sigma(u) + \rho(w)$, where $\sigma$ is the continuous control cost component and $\rho$ is the cost due mode switching.
	For $\beta >0$, define a final cost function to be evaluated at the final state $z=(x,y,t,k)$ to be $\phi(z) = -\beta x$. 
	The final cost function $\phi(\cdot)$ is assumed to have a finite expected value.
	The parameter $\beta$ works as a scaling factor adjusting the relative value of the electricity and fuel consumption and can be seen as reflecting the economic price of $1\%$ of SOC relative to $1l$ of fuel.
	\begin{definition}
		Given an initial state $z$ and an admissible policy $\pi \in \Pi(z)$, the states $Z_1,\cdots,Z_K$ are random variables given by 
		\begin{equation}
			Z_1 = z,~~ Z_{k+1} = h(Z_k,a_k,\xi_k), ~k=2,\cdots,K.			
			\label{process}
		\end{equation}
		where $h$ abbreviates \eqref{xeq}-\eqref{qeq}.
		For $k=1,\cdots,K$, the expected total cost of the process \eqref{process} with policy $\pi$ starting at $z$ is		
		\begin{equation}
			J(z,\pi) = \EEE \left[\sum_{k=1}^{K}{\sigma(u_k) + \rho(w_k)}  -\beta X_{K+1}  \right]
		\label{expectedCost}
		\end{equation}
		where the expectation is taken over the $\xi_k$ and the $Z_k$.	
	\end{definition}
	Observe that even if some discrete orders need to wait some time before being executed, their cost is incurred at the decision stage.	
	A policy $\pi^*$ is said to be optimal if,
	\begin{equation}
		J(z,\pi^*) = \min_{\pi \in \Pi(z)} J(z,\pi).
	\label{optimalExpectedCost}
	\end{equation}	
	i.e., it minimizes the expected cost \eqref{expectedCost}.
			
	\subsection{Value Functions and Stochastic Dynamic Programming Algorithm}
	To establish a dynamic programming principle it is necessary to extend the minimization problem \eqref{optimalExpectedCost} for general initial conditions. 
	
	The discrete random process departing from link $k_0$, given a initial condition $z \in \ZZZ$, and an admissible hybrid policy $\pi \in \Pi(z)$ is solution of
	\begin{eqnarray}
		\nonumber Z_{k+1} &=& h(Z_k,a_k,\xi_k), ~k=k_0,\cdots,K+1, \\
		Z_{k_0} &=& z.
	\label{processGeneralStart}
	\end{eqnarray}	
	Given instant cost functions $l_k$ for $k=k_0,\cdots,K$, the expected cost-to-go, function of the process \eqref{processGeneralStart} is 
	\begin{equation}
		C(z,k_0,\pi) = \EEE \left[\sum_{k=k_0}^{K}{\sigma(u_k) + \rho(w_k)}  -\beta X_{K+1}  \right].
	\label{expectedCostGeneralStart}
	\end{equation}		
	Define the value function of the optimal control problem of finding a policy that minimizes the expected cost \eqref{expectedCostGeneralStart} to be
	\begin{equation}
		v(z,k_0) = \min_{\pi \in \Pi(z)} C(z,k_0,\pi).
	\label{valueFunction}
	\end{equation}	
	A stochastic dynamic programming principle can be obtained from \eqref{valueFunction}.
	The next proposition allow the evaluation of the value function in a classical backward fashion.
	\begin{proposition}
		The value function \eqref{valueFunction} is calculated by the backward procedure:
		\begin{itemize}
			\item $v(z_{K+1},K+1) = -\beta x_{K+1};$
			\item For $k=K,\cdots,1$, 
			\begin{equation}
				v(z_k,k) = \min_{a \in \AAA(z_k)} \EEE [ \l(z_k,a_k,\xi_k) +  v(Z_{k+1},k+1) ].
			\label{SDPA}
			\end{equation}
		\end{itemize}
	\label{SDPAProposition}	
	\end{proposition}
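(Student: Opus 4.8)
The plan is to prove the recursion by \emph{backward induction} on the link index $k$, showing at each stage that the value function \eqref{valueFunction} coincides with the one-step Bellman update on the right-hand side of \eqref{SDPA}. Two structural facts drive the argument: first, the controlled process \eqref{processGeneralStart} is Markovian, in the sense that, given $z_k$ and the control $a_k$, the law of $Z_{k+1}$ depends only on $z_k$, $a_k$ and the exogenous draw $\xi_k$, which is independent of the past; second, the admissibility requirements of Definition \ref{admissibleControl} are, for the most part, local, since conditions $(1)$ and $(2)$ are already built stage-by-stage into the control spaces $\AAA(z_k)$ through $\UUU(z)$ and $W(z)$, so that the only genuinely global condition is the terminal one $(3)$, namely $t_{K+1} > \de$.

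For the base case $k=K+1$ the summation in \eqref{expectedCostGeneralStart} is empty, hence $C(z_{K+1},K+1,\pi) = -\beta x_{K+1}$ for every $\pi$ and therefore $v(z_{K+1},K+1) = -\beta x_{K+1}$. For the inductive step I would assume \eqref{SDPA} at stage $k+1$ and establish the two inequalities separately. To get ``$\leq$'', fix any $a \in \AAA(z_k)$ and, for each possible successor $Z_{k+1}$, a near-optimal admissible tail policy; concatenating $a$ at stage $k$ with that tail produces a policy whose admissibility with respect to $z_k$ I must verify, conditions $(1)$--$(2)$ holding because $a \in \AAA(z_k)$ and condition $(3)$ being inherited from the tail. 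Splitting the expectation over $\xi_k$ by the tower property and invoking the inductive identity then yields $v(z_k,k) \le \EEE[\l(z_k,a,\xi_k) + v(Z_{k+1},k+1)]$, and minimizing over $a$ gives the bound. For ``$\geq$'', take a near-optimal $\pi^* \in \Pi(z_k)$; its first action $a_k = \al_k(z_k)$ lies in $\AAA(z_k)$ and its tail from $k+1$ onward is admissible for $Z_{k+1}$, so conditioning on $\FFF_k$ decomposes the cost as $C(z_k,k,\pi^*) = \EEE[\l(z_k,a_k,\xi_k) + C(Z_{k+1},k+1,\pi^*)]$. Bounding the tail below by $v(Z_{k+1},k+1)$ and then by the minimum over $\AAA(z_k)$, and letting the optimization tolerance tend to $0$, closes the induction.

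The step I expect to be the main obstacle is the admissibility bookkeeping under these concatenation and restriction operations, and specifically the terminal constraint $(3)$: unlike $(1)$--$(2)$ it cannot be enforced by a single-stage control set, so I would show explicitly that the tail of any policy in $\Pi(z_k)$ restricts to an admissible policy for the shifted problem started at $Z_{k+1}$ and, conversely, that appending an admissible tail to a locally admissible action never violates $(3)$. The time-counter dynamics \eqref{teq} make this transparent, as $t_{K+1}$ is fixed by the last switch decision together with the intervening link lengths and speeds and is unaffected by earlier choices. A secondary technical point is the attainment of the minimum in \eqref{SDPA} and the existence of the near-optimal selectors used above: since $W(z)$ is finite and $\UUU(z)$ compact, with $\l(z,\cdot)$ and $v(\cdot,k+1)$ measurable and lower semicontinuous, a measurable-selection argument guarantees both that the infimum is achieved and that the tail policies can be chosen measurably in $Z_{k+1}$, so that $v(\cdot,k)$ is again a bona fide value function at stage $k$.
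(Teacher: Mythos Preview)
Your proposal is correct and follows exactly the classical backward-induction route the paper invokes: the paper itself gives no detailed proof, merely stating that ``the proof is derived from classical arguments'' and citing \cite{Bardi97optimalcontrol}. Your treatment of the admissibility bookkeeping (in particular the terminal condition $(3)$) and the measurable-selection point goes beyond what the paper spells out, but is entirely in line with the standard argument it defers to.
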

	\begin{proof}
	The proof is derived from classical arguments. See for instance \cite{Bardi97optimalcontrol}.
	\end{proof}

	\subsection{Optimal Trajectory Reconstruction}
	
	This section details the procedure for synthesizing an optimal policy. 
	Evaluating the value function for all values of a discretized state space and all links is the first step into finding a sequence of control decisions that minimizes \eqref{expectedCost}. 
	Then, having particular initial conditions, one can synthesize a control sequence using an optimal trajectory reconstruction algorithm.	

	\begin{ALGORITHM} 
	
			\begin{enumerate}
			\item Step 1: Value function initialization.
				\begin{enumerate}
					\item Set $k=K+1$, set $\beta >0$;
					\item For all $z=(x,y,t,q)\in \ZZ$ set 
					$$v(z,K) = \left\{ \begin{array}{lll}
					\beta x_k & \text{if} & t = \delta, \\
					\infty & \text{if} & t < \delta.
					\end{array} \right.$$
				\end{enumerate}
			\item Step 2: Backward evaluation.
				\begin{enumerate}
					\item For $k=K,\cdots,1$, for all $z_k \in \ZZ$ evaluate:
					\begin{eqnarray*}
						v(z_k,k) &=& \min_{a \in \AAA(z_k)} \sigma(u) + \rho(w) + \\
					&&  \sum_{\xi \in \Omega(\eta_{k+1} )}v(z_{k+1},k+1) \PP(y_k,\xi) .
					\end{eqnarray*}
				\end{enumerate}
			\end{enumerate}
		\label{backwardEvaluation}
		\end{ALGORITHM} 
	Algorithm \ref{backwardEvaluation} is used for evaluation of \eqref{valueFunction} in all state space and at all links, where $\ZZ$ denotes a grid obtained from a discretization of $\ZZZ$.
	Once evaluating \eqref{valueFunction} in a grid of all state space and at all links, given a initial condition $Z_1 = z$, the optimal trajectory reconstruction is made as follows:  	
		\begin{ALGORITHM} 
			\begin{enumerate}
			\item Step 1: Initialization.
				\begin{enumerate}
					\item Set $k=1$, $z_1 = z$;
				\end{enumerate}
			\item Step 2: Control decision.
				\begin{enumerate}
					\item $a_k^* = \arg \min_{a \in \AAA(z) } \EEE[\sigma(u)+\rho(w) + v^\#(h(z_k,a,\xi),k+1)]$ ;
				\end{enumerate}				
			\item Step 3: Random realization and state evolution.
				\begin{enumerate}		
					\item $\xi = \xi^*$ ;
					\item $z_{k+1} = h(z_k,a_k^*,\xi^*)$ ;
				\end{enumerate}			 
			\item Step 4: Advance to next stage.
				\begin{enumerate}
					\item $k = k+1$ ;
					\item If $k=K+1$ terminate. Else, go to step 2 ;
				\end{enumerate}
			\end{enumerate}
		\label{trajectoryConstruction}
		\end{ALGORITHM}	
		Here,  $v^\#$ is the interpolate of the value function on the grid $\ZZ$ at $z_{k+1}$. 

	\section{Numerical Application}
	
	This section discusses the results obtained with simulated PMSs using a suitable speed profile. 
	Firstly, it states in what conditions the PMSs are synthesized and shows some of its main characteristics.
	Secondly, results of the performance of the calculated PMSs running in real speed cycles are shown.
	The performances of policies $\pi$ are discussed in the light of the economic gain relative to a pure EV strategy, i.e., a strategy that does not use the RE, namely
	\begin{equation}
		J^* = \frac{-\beta x_{K+1} + \sum_{k=1}^{K}{\sigma(u_k)+\rho(w_k)} }{- \beta x_{K+1}^{\text{EV}} }
		\label{criteria}
	\end{equation}•	
	where $ x_{K+1}^{\text{EV}}$ is the SOC when the RE is not activated.
	Also, the CPU time needed for policy evaluation is commented.

	In this section, three different ways of adding some lag/delay information in the synthesis of optimal controllers  are analyzed. 
	The first one is a rather classical strategy of penalization of the switch cost $\rho$ by a factor $\lambda>1$. 
	The second technique profits from the available NAV data in order to imbue the dynamic programming algorithm implicitly with time-related information -- recall that the dynamic programming algorithm is based on a route segmentation, being thus space-related. 
	Finally, the third method is the one given by algorithms \ref{backwardEvaluation} and \ref{trajectoryConstruction}, using the state variable $t$ and not relying on any particular structure of the problem itself, thus having the advantage of being flexible and general.
	
	\subsection{Deterministic PMSs and Simulations}
	
	In order to establish a benchmark this section analyzes controllers synthesized using NAV data and simulated in its ideal case, i.e. a deterministic case where the vehicle speed through the route follows exactly the NAV suggested speed - and hence, is perfectly known. 
	This result will then be compared to the same controller simulated in real driving conditions, using data record from the vehicle in order to assess the performance loss. 
	
	As pointed out, the deterministic approach consists in assuming that the driver will follow exactly the speed profile suggested by the NAV. 
	Hence, the variable $y$ can be removed from the hybrid state and the problem takes a deterministic form.
	For a fixed route, and thus a fixed NAV suggested speed profile, the value function is evaluated using the backward dynamic programming algorithm \ref{backwardEvaluation} where the expected value is dropped since it considers but one possible realization of the driver speed. 
	Then, setting initial conditions, the controller is synthesized using algorithm \ref{trajectoryConstruction}.
	Throughout all simulations, $\beta=2$.

\subsubsection{Penalization factor $\lambda$}
	Because the intermittent fast switching is undesirable, one possible approach that aims at decreasing the number of switches of the RE is the switch cost penalization. 
	This approach sets $\delta=0$ and includes a multiplicative penalty $\lambda>1$ at each switch of the RE, increasing the switch cost to $\lambda \rho(\cdot)$. 

	The CPU time is of $26$ seconds for all values of $\lambda$. 
	For $\lambda=1$ the system is in its nominal cost configuration without any lag/delay constraints. 
	Indeed, the strategy has a better performance \eqref{criteria} than when we add the penalty (cf table \ref{tab:PerformanceCriteriaForAlgorithms}), as one should expect. 
	For $\lambda=2$, a compromise is achieved between the criteria obtained and the number of switches executed. 
	The controller executes $8$ switches but, by inspecting the trajectories, one can remark that they are not sufficiently apart to respect the decision lag constraint. 
	For $\lambda>2$ the strategy does not change. It becomes optimal to turn on the RE right away and turn it off only near the end of the route because of the higher cost to restart it later. 
	The penalization technique in spite of reducing the number of switches effectively does not respect the control constraints needed in real applications. 
	Indeed, even if there are fewer switches, they are not necessarily spaced enough in time to respect the decision lag. 
	Additionally, in no case the execution delay constraint is respected. 
	This is an expected behavior as the penalization technique does not supply any direct information about any of the control constraints to the value function.
	 Despite the advantages -- simple numerical implementation and fast execution time -- this approach does not seem fit for the desired application. 
	\begin{table}[htbp]
	\caption{Performance criteria for penalized deterministic controllers with $\delta = 0$.}
	\label{tab:PerformanceCriteriaForAlgorithms}
	\centering
		\begin{tabular}{|l|c|c|}
		\hline
		$ $ & $J^*$  &switches\\ \hline
		 $\lambda=1$ & $1.1432 $ &22 \\ \hline
		 $\lambda=2$ & $1.0812 $ &8\\ \hline
		 $\lambda=20$ & $1.0534$ &2\\ \hline
		\end{tabular}
\end{table}

\subsubsection{Discretization grid adaptation}	
	
	When dealing with a deterministic formulation, the vehicle speed $y$ can be removed from the state vector. 	
	 In addition, as the vehicle speed is known beforehand, there is an unique relation between the position of any point through the route and the time it will be reached by the vehicle. 
	In this approach, that information is used for the discretization of the dynamic programming algorithm. 
	Conveniently, the discretization is made in intervals of time $\Delta_t$ that are sub-multiples of the delay and lag time $\delta$, satisfying the relation $\delta = m \Delta_t$, $m$ integer. 
	Indeed, proceeding as such, as one evaluates the value function at a node $k$, there is implicit information about the time since the vehicle departure, which is $k\Delta_t$.

	Observing the different performance levels for the values of $m$ considered, ranging from an improvement of $2.0\%$ to $5.6\%$, one can infer that the route discretization plays an important role in the performance level of the PMS.
	The major drawback of this approach is that, in spite of the constraints verification and the simplification obtained in algorithms \ref{backwardEvaluation} and \ref{trajectoryConstruction}, the CPU time becomes rapidly rather large (c.f. table \ref{tab:PerfGrid}).
	For that reason, such an approach is not judged fit for an application.
	Also, remark that even if the grid could be discretized in a more sophisticate fashion -- e.g., making $m$ vary to capture a  more volatile speed in some sectors -- such an approach would fail in the stochastic case, because the space-time transformation would not be available.
	Although this method cannot be applied in a stochastic scenario, it can still contribute to very relevant information concerning the route discretization.
	  
\begin{table}[htbp]
	\caption{Performance criteria and CPU time for deterministic controllers synthesized using adapted grid.}
	\label{tab:PerfGrid}
	\centering
		\begin{tabular}{|l|c|c|}
		\hline
		$$ & $J^*$ & CPU time (s)\\ \hline
		 $m=1$ & $1.0205$ & 5.28 \\  \hline
		 $m=2$ & $1.0563$ & 135.11 \\ \hline
		 $m=3$ & $1.0345$ & 402.72 \\ \hline
		 $m=4$ & $1.0515$ & 789.70\\ \hline	
		\end{tabular}
\end{table}

\subsubsection{General approach}		
	
	This subsection presents the results obtained for a deterministic approach that incorporates the lag and delay constraints on the control by setting $\delta = 120$ and taking into account the state vector variable $t$. 
	The relative performance level achieved in this case is $J^*=1.046$, which represents an improvement of $4.6\%$ over a purely electrical strategy.
	The CPU time for the controller synthesis is $204$ seconds, which is about the same order of the timewise grid discretization with $m=2$.
	However, when $m=2$, the performance level is $5.6\%$ better than an all EV policy and thus, better than the general approach.
	Again, this remark suggests that the route discretization plays a major role in the policy performance, as one might expect.
	
	\subsection{Stochastic PMSs and Simulations}
	The analysis of results of controllers synthesized for use with a real driving cycle is presented in this section.
	In this case, the vehicle future speed is known with certain probability.
	The value function is evaluated using algorithm \ref{backwardEvaluation} and a policy is synthesized using NAV speed data, both setting $\delta=0$ and using a penalization factor $\lambda$ and setting $\delta = 120s$ and including the time variable in the state vector.
	Next, this policy is simulated using several recorded real speed profiles.
	Each simulation is associated with a relative performance criteria $J^*$.
	The mean performance of the synthesized policies are grouped in table  \ref{tab:StoPerformanceCriteriaForAlgorithms}.
	Figure \ref{stoLambdaTkComparisonfig} also includes the standard deviation of the relative performances.
	When including a penalization factor $\lambda$, fewer switching is observed and the ICE is on the most throughout most of the path. 
	However, the activation and decision constraints are not properly taken care of by the controller, not being, thus, suitable for any realistic application.
	In average, the formulation including $t$ as a state variable has a better relative performance level achieving an improvement of $3.9 \%$ over a purely electric mode as well as respecting the control constraints. 
	\begin{table}[htbp]
	\caption{Performance criteria for penalized stochastic controllers with $\delta = 0$ and controller with $\delta = 120s$.}
	\label{tab:StoPerformanceCriteriaForAlgorithms}
	\centering
		\begin{tabular}{|l|c|c|c|}
		\hline
		$ $ & $\bar{J^*}$  &switchs (mean) & CPU time (s)  \\ \hline
		 $\lambda=1$ & $1.0339 $ &$23.50$ &	\multirow{3}{*}{2320}		\\ 
		 $\lambda=2$ & $1.0284 $ &$16.50$ &			\\ 
		 $\lambda=20$ & $0.9926$ &$10.06$ & 		\\ \hline
		 $\delta=120$ & $1.0386$  &$19.25$ & 4964 \\ \hline
		\end{tabular}
\end{table}

		\begin{figure}[tb]
		\begin{center}
			\includegraphics[width=.95\columnwidth]{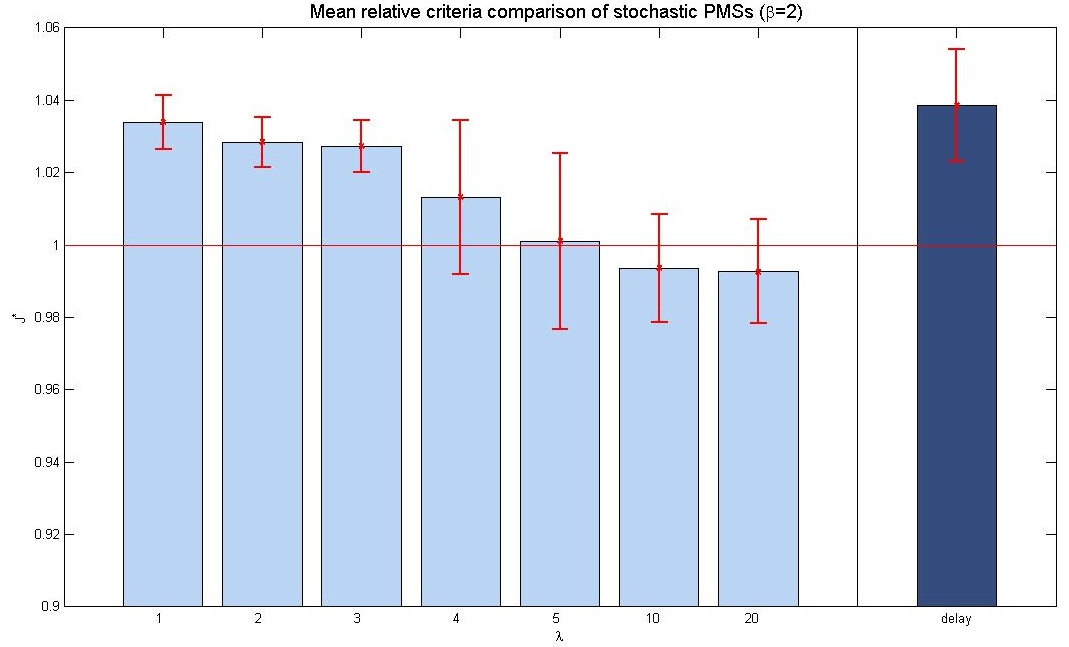} 
			\caption{Comparison between mean relative performance criteria for stochastic PMSs simulated using real data. }
			\label{stoLambdaTkComparisonfig}
		\end{center}
	\end{figure}

\section{Discussion}

	This work presents a stochastic dynamic programming algorithm for synthesizing optimal power management strategies, suitable for range-extender electric vehicles. 
	The stochastic model considers that along a geographic route the vehicle's speed is given by a mean value plus a random disturbance. 
	The hybrid dynamical system framework is used to state an hybrid optimal control problem. 
	The model considered presents two important features, namely, the utilization of information from the vehicle's navigation system the inclusion of physical constraints on the range-extender -- the activation delay and the decision lag. 
	Three methods for including the aforementioned control constraints are studied: penalization of the switching cost, discretization of the path in multiples of the delay/lag time and inclusion of a time state variable. 
	As a conclusion, the only method capable of taking into account the delay/lag constraints in a stochastic scenario is the general formulation, including a time state variable. 
	Results using real recorded data show that the inclusion of the time state variable allows an average improvement of $3.86\%$ in the performance level when compared to a purely electrical strategy.
	


\end{document}